\newtheorem{thm}{Theorem}[section]
\newtheorem{cor}[thm]{Corollary}
\newtheorem{lem}[thm]{Lemma}
\newtheorem{prop}[thm]{Proposition}
\newtheorem{problem}{Problem}
\theoremstyle{definition}
\numberwithin{equation}{section}
\newcommand{\bN}{\mathbb{N}}
\newcommand{\bR}{\mathbb{R}}
\newcommand{\supp}{\operatorname{supp}}
\newcommand{\diam}{\operatorname{diam}}
\newcommand{\conv}{\operatorname{convexhull}}
\newcommand{\dif}{\,\mathrm{d}}
\DeclareMathOperator{\card}{card}
\newcommand{\be}{\begin{equation}}
\newcommand{\ee}{\end{equation}}
\newcommand{\ba}{\begin{align}}
\newcommand{\ea}{\end{align}}
\newcommand{\baa}{\begin{align*}}
\newcommand{\eaa}{\end{align*}}
\newcommand{\logplus}{\ensuremath{\log^+}}
\begin{document}
\title[On a.e. convergence of tensor product spline projections]{On almost everywhere convergence of tensor product spline projections}
\author[M. Passenbrunner]{Markus Passenbrunner}
\address{Institute of Analysis, Johannes Kepler University Linz, Austria, 4040 Linz, Altenberger Strasse 69}
\email{markus.passenbrunner@jku.at}

\author[J. Prochno]{Joscha Prochno}
\address{School of Mathematics \& Physical Sciences, University of Hull, Cottingham Road, Hull, HU6 7RX, United Kingdom}
\email{j.prochno@hull.ac.uk}

\keywords{Tensor product B-splines, Orlicz class, a.e. convergence, orthogonal projection}
\subjclass[2010]{Primary 41A15; Secondary 42B25}
\date{\today}
\begin{abstract}
Let $d\in\mathbb N$ and $f$ be a function in the Orlicz class $L(\logplus
L)^{d-1}$ defined on the unit cube $[0,1]^d$ in $\mathbb{R}^d$. 
Given knot sequences
$\Delta_1,\ldots,$ $\Delta_d$ on $[0,1]$, we first prove that the orthogonal
projection $P_{(\Delta_1,\dots,\Delta_d)}(f)$ onto the space of tensor product
splines with arbitrary orders $(k_1,\dots, k_d)$ and knots $\Delta_1,\ldots,
\Delta_d$ converges to $f$ almost everywhere as the mesh diameters
$|\Delta_1|,\ldots, |\Delta_{d}|$ tend to zero. This extends the 1-dimensional
result in \cite{PassenbrunnerShadrin2014} to arbitrary dimensions.

In a second step, we show that this result is optimal, i.e., given any ``bigger'' Orlicz class $X=\sigma(L)L(\logplus L)^{d-1}$ with an arbitrary function $\sigma$ tending to zero at infinity, there exists a function $\varphi\in X$ and partitions of the unit cube such that the orthogonal projections of $\varphi$ do not converge almost everywhere.
\end{abstract}

\maketitle
\renewcommand{\theenumi}{\roman{enumi}}

\section{Introduction and Main results}\label{sec:intro}

The notion of splines is originally motivated by concepts used in shipbuilding design and was first introduced by Schoenberg in his 1946 paper \cite{Schoenberg1946a} to approach problems of approximation. The particular interest in tensor product splines, besides a purely mathematical one, is due to their various applications in high-dimensional problems. For instance, in statistics they are used in non-parametric and semi-parametric multiple regression where high-dimensional vectors of covariates are considered for each observation (see, e.g., \cite{Wahba}) and in the approximation of finite window roughness penalty smoothers \cite{GreenSilverman}. In data mining they appear in predictive modeling with multivariate regression splines in form of popular MARS or MARS-like algorithms \cite{zaki2003}.
Further applications appear in problems related to high-dimensional numerical
integration.
With this paper we contribute to a better understanding of the theoretical aspects of tensor product splines.

One of the major mathematical achievements in the last years is Shadrin's proof of de Boor's conjecture \cite{Shadrin2001}, where he showed that the $\max$-norm of
the orthogonal projection $P_{\Delta}$ onto spline spaces of arbitrary order $k$ with knots $\Delta$ is bounded independently of the
knot-sequence $\Delta$. In particular, this result implies the
$L_p$-convergence ($1\leq p < \infty$) of orthogonal spline projections, i.e., for all $f\in L_p[a,b]$,
\[
  P_{\Delta}(f)\stackrel{L_p}{\longrightarrow} f,
\]
provided the mesh diameter $|\Delta|$ tends to zero.
A similar result holds for the $L_{\infty}$-norm if one replaces
the space $L_\infty$ with the space of continuous functions.
Recently, in \cite{PassenbrunnerShadrin2014}, Shadrin and the first named author extended this result. They were
able to prove that the $\max$-norm boundedness of $P_{\Delta}$ implies the almost everywhere (a.e.) convergence
of orthogonal projections $P_{\Delta}(f)$ with arbitrary knot-sequences $\Delta$ and $f\in L_1[a,b]$,
provided that the mesh diameter $|\Delta|$ tends to zero. Their proof is based on a classical approach where a.e.
convergence is proved on a dense subset of $L_1$ and where it is shown that the maximal projection operator
is of weak $(1,1)$-type. The main tool in the proof of this theorem is a sharp decay inequality for inverses
of B-spline Gram matrices.

This leaves open the natural question of a corresponding result in higher dimensions.
In a first step in this work, we extend the 1-dimensional result obtained in
\cite{PassenbrunnerShadrin2014} to
arbitrary dimensions $d\in\mathbb N$, where the function $f$, defined on the
unit cube in $\bR^d$, belongs to the Orlicz class $L(\log^+L)^{d-1}$ (details
are given below). In a second step, and this is the main result of this paper,
we prove that this is in fact optimal. 

Let us present our results in more detail. We write $P_{\bf \Delta}$ for the orthogonal projection operator from $L_2[0,1]^d$ onto the linear span of the sequence of tensor product B-splines and denote by $|{\bf\Delta}|$ the maximal directional mesh width.
The first result of this article is the a.e. convergence of $P_{\bf \Delta}f$ to $f$ for the Orlicz class $L(\logplus L)^{d-1}$:
\begin{thm}\label{thm:ae}
Let $f\in L(\logplus L)^{d-1}$.
Then, as $|{\bf\Delta}|\to 0$,
\begin{equation*}
P_{\bf\Delta}f\to f\quad\text{a.e.}.
\end{equation*}
\end{thm}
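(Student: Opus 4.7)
The plan is to reduce Theorem~\ref{thm:ae} to iterated applications of the one-dimensional result of \cite{PassenbrunnerShadrin2013} via a tensor-product maximal function argument. Write $P^{(\mu)}_{\Delta_\mu}$ for the univariate orthogonal spline projection (with knots $\Delta_\mu$ and order $k_\mu$) acting only in the $\mu$-th variable, with the remaining variables held fixed. Since the tensor product spline space is the algebraic tensor product of the univariate spline spaces and the $L^2([0,1]^d)$ inner product factors via Fubini, one obtains the commuting product decomposition $P_{\bf\Delta} = P^{(1)}_{\Delta_1}\circ\cdots\circ P^{(d)}_{\Delta_d}$, the $d$ factors being pairwise commuting. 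This reduces the analysis of $P_{\bf\Delta}$ to $d$ univariate arguments, each acting in a single coordinate.

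The central step is a pointwise maximal inequality. The one-dimensional analysis of \cite{PassenbrunnerShadrin2013} should deliver an estimate of the form $\sup_{\Delta_\mu}|P^{(\mu)}_{\Delta_\mu}f(x)|\leq C_{k_\mu}\,M_{HL}^{(\mu)}f(x)$, uniform in the partition $\Delta_\mu$, where $M_{HL}^{(\mu)}$ is the Hardy--Littlewood maximal operator in the $\mu$-th variable. Applying this coordinate-wise and using the commutativity of the factors yields
\begin{equation*}
\sup_{\bf\Delta}|P_{\bf\Delta}f(x)|\;\leq\; C_{\bf k}\,M_{HL}^{(1)}\cdots M_{HL}^{(d)}f(x) \;=:\;C_{\bf k}\,\mathcal{M}_s f(x),
\end{equation*}
where $\mathcal{M}_s$ is the strong (rectangular) maximal operator. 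By the classical Jessen--Marcinkiewicz--Zygmund theorem, $\mathcal{M}_s$ obeys the Orlicz weak-type inequality
\begin{equation*}
|\{x\in[0,1]^d : \mathcal{M}_s f(x)>\lambda\}|\;\leq\; C\int_{[0,1]^d}\frac{|f(x)|}{\lambda}\Bigl(1+\logplus\frac{|f(x)|}{\lambda}\Bigr)^{d-1}\dif x,
\end{equation*}
and the tensor-product spline maximal operator inherits the same inequality on $L(\logplus L)^{d-1}$.

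Given this maximal bound, a.e. convergence follows from a standard density argument: for smooth $f$ one has $P_{\bf\Delta}f\to f$ uniformly as $|{\bf\Delta}|\to 0$ by Jackson-type approximation estimates for tensor product splines; splitting an arbitrary $f\in L(\logplus L)^{d-1}$ as $f=g+h$ with $g$ smooth and $h$ arbitrarily small in Orlicz norm, the triangle inequality $|P_{\bf\Delta}f-f|\leq |P_{\bf\Delta}g-g|+\sup_{\bf\Delta}|P_{\bf\Delta}h|+|h|$ together with the weak-type bound controls $\limsup_{|{\bf\Delta}|\to 0}|P_{\bf\Delta}f-f|$ in measure, and letting $\|h\|$ tend to zero yields convergence almost everywhere. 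The main obstacle is the first step: securing the pointwise majorization of the univariate spline projection by the Hardy--Littlewood maximal function, uniform over partitions, and iterating it in several variables where the partition in one variable may depend measurably on the frozen variables. Measurability of the relevant suprema is handled by restricting to a countable dense family of knot sequences; once the pointwise bound by $\mathcal{M}_s$ is in place, the rest of the argument is standard real-variable theory.
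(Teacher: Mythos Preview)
Your proposal is correct and essentially matches the paper's approach: pointwise domination of $P_{\bf\Delta}$ by the strong maximal function, the Jessen--Marcinkiewicz--Zygmund weak-type inequality~\eqref{eq:weaktype}, and a density argument via uniform convergence on continuous functions. The only cosmetic difference is that the paper obtains the bound $|P_{\bf\Delta}f|\le c\,{\rm M_S}f$ directly from the $d$-dimensional Dirichlet kernel estimate (the product of the one-dimensional kernel estimates of \cite{PassenbrunnerShadrin2013}) rather than by iterating the one-dimensional operator inequality; note also that your concern about partitions depending on the frozen coordinates is unnecessary in the tensor-product setting, since each $\Delta_\mu$ is fixed independently of the other variables and the iteration $|P_{\bf\Delta}f|\le C_{k_1}M_{HL}^{(1)}|P^{(2)}_{\Delta_2}\cdots P^{(d)}_{\Delta_d}f|\le\cdots$ goes through by simple monotonicity of the maximal operators.
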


The second and main result of this work shows that this result is optimal:
\begin{thm}\label{thm:saks-extension}
For any positive function $\sigma$ on $[0,\infty)$ with $\liminf_{t\to\infty}\sigma(t)=0$, there exists a non-negative function $\varphi$ on $[0,1]^d$ such that
\begin{enumerate}
\item the function $\sigma(\varphi)\cdot\varphi\cdot(\logplus \varphi)^{d-1}$ is integrable,
\item 
	there exists a subset $B\subset [0,1]^d$ of positive Lebesgue measure and a sequence of partitions
	$({\bf\Delta}_n)$ of $[0,1]^d$ with $|{\bf\Delta}_n|\to 0$ such that, for all $x\in B$,
	\begin{equation*}
		\limsup_{n\to\infty} |P_{{\bf\Delta}_n}\varphi(x)| = \infty.
	\end{equation*}
\end{enumerate}
\end{thm}

{The paper is organized as follows. In Section \ref{sec:prelim and main results}
we present the notation and notions used throughout this work and present some
preliminary results. 
In Section \ref{sec:proofs}, 
we then give the proof of Theorem
\ref{thm:ae}. The proof of Theorem \ref{thm:saks-extension}, showing the
optimality of Theorem \ref{thm:ae}, is presented in Section
\ref{sec:negative}. 
{We conclude the paper in Section \ref{sec:final remarks} with some final
remarks and an open problem that we consider to be of further interest.

\section{Notation and Preliminaries}\label{sec:prelim and main results}

In this section we introduce the notation used throughout the text and present
some background material such as a multi-dimensional version of Remez'
inequality, which we will use later, and recall the definition of tensor product B-splines.

\subsection{General notation}

We write $\card[A]$ to denote the cardinality of a set $A$. The symbol $|\cdot|$ will be used for the modulus, the mesh width and the Lebesgue measure and the meaning as well as the dimension of the Lebesgue measure will be always clear from the context.
Given a compact metric space $M$, we denote by $C(M)$ the space of continuous functions on $M$. As usual, for $1\leq p \leq \infty$ and a measure space $(E,\Sigma,\mu)$, we denote by $L_p(E)$ the space of (equivalence classes of) measurable functions $f:E\to \mathbb R$ for which
\[
\|f\|_{L_p(E)} := \bigg(\int_E |f|^p \dif \mu\bigg)^{1/p} <\infty
\]
for $1\leq p<\infty$ and
\[
\|f\|_{L_\infty(E)} := \inf\{\rho\geq 0\,:\, \mu(|f|>\rho) = 0 \}<\infty
\]
when $p=\infty$. We will also write $\|f\|_p$ instead of
	$\|f\|_{L_p(E)}$ when the choice of $E$ is clear from the context.
	More generally, given a convex function $M:[0,\infty)\to[0,\infty)$ with
		$M(0)=0$, the set of all (equivalence classes of) measurable
		functions $f:E\to\mathbb R$ such that, for some (and thus
		for all) $\lambda>0$,
\[
\int_{E}M\left({|f|\over \lambda}\right)\,\dif \mu < \infty,
\]
is called {Orlicz space} associated with $M$ and is denoted by $L_M(E)$. This space becomes a Banach space when it is supplied with the Luxemburg norm
\[
\|f\|_{M} = \inf\bigg\{ \lambda>0 \,:\, \int_{E}M\left({|f|\over \lambda}\right)\,\dif \mu \leq 1 \bigg\}\,.
\]
In this work, we consider functions $f$ defined on the unit cube $[0,1]^d$,
which belong to the Orlicz space $L(\log^+ L)^{j}$, i.e., $|f|(\log^+ |f|)^j$ is
integrable over $[0,1]^d$ with respect to Lebesgue measure, where
$\log^+(\cdot):=\max\{0,\log(\cdot)\}$. More information and a detailed
exposition of the theory of Orlicz spaces can be found, for instance, in
\cite{KrasnoselskiiRutickii,Kosmol,RaoRen1991,RaoRen2002}.

\subsection{Remez' inequality for polynomials}
We will need the following multi-dimensional version of Remez' theorem
(see \cite{Ga2001,BrudnyiGanzburg1973}). If
	$p(x)=\sum_{\alpha\in I} a_\alpha x^\alpha$ is a $d$-variate polynomial
	where $I$ is a finite set containing $d$-dimensional multiindices, the
	degree of $p$ is defined as $\max\{\sum_{i=1}^d \alpha_i :
\alpha\in I\}$.
Recall that a convex body in $\mathbb R^d$ is a compact, convex set with non-empty interior.
\begin{thm}[Remez, Brudnyi, Ganzburg]
	Let $d\in\mathbb N$, $V\subset \mathbb R^d$ a convex body and $E\subset V$ a measurable
	subset. Then, for all polynomials $p$ of degree $k$ on $V$,
	\begin{equation*}
		\| p \|_{L_\infty(V)} \leq \bigg( 4d \frac{|V|}{|E|}\bigg)^k \| p
		\|_{L_\infty(E)}.
	\end{equation*}
\end{thm}
We have the following corollary:
\begin{cor} \label{cor:remez}
Let $p$ be a polynomial of degree $k$ on a convex body $V\subset \mathbb R^d$. Then
\begin{equation*}
	\big|\big\{ x \in V : |p(x)| \geq (8d)^{-k} \|p\|_{L_\infty(V)} \big\}\big| \geq |V|/2.
\end{equation*}
\end{cor}
\begin{proof}
	This follows from an application of the above theorem to the set $E = \{x\in
		V : |p(x)| \leq (8d)^{-k} \|p\|_{L_\infty(V)}\}$.
\end{proof}

\subsection{Tensor product B-splines}\label{subsec:Bsplines}

We will now provide some background information on tensor product splines. For more information we refer the reader to \cite[Section 12.2]{Schumaker2007}. Let $d\in \bN$ and for $\mu\in\{1,\dots,d\}$, let $k_\mu$ be the order of polynomials in
the direction of the $\mu$-th standard unit vector in $\mathbb{R}^d$,
where the order of a univariate polynomial refers to the degree plus $1$. For each such $\mu$, we define a partition of the interval $[0,1]$ by
\[
\Delta_\mu = (t_i^{(\mu)})_{i=1}^{n_\mu+k_\mu},\quad n_\mu\in\mathbb N,
\]
where, for each $i<n_\mu+k_\mu$ and $j\leq n_\mu$,
\[
t_i^{(\mu)}\leq t_{i+1}^{(\mu)}\quad\text{and}\quad  t_j^{(\mu)}<t_{j+k_{\mu}}^{(\mu)},
\]
as well as 
\[
t_1^{(\mu)}=\dots=t_{k_\mu}^{(\mu)}=0\quad\text{and}\quad 1=t_{n_\mu+1}^{(\mu)}=\dots=t_{n_\mu+k_\mu}^{(\mu)}.
\]

A boldface letter always denotes a vector of $d$ entries and its coordinates are denoted by the same letter, for instance ${\bf n}=(n_1,\dots,n_d)$, ${\bf k}=(k_1,\dots, k_d)$, or ${\bf \Delta}=(\Delta_1,\dots,\Delta_d)$.
We let $(N_i^{(\mu)})_{i=1}^{n_\mu}$ be the sequence of B-splines of order $k_\mu$ on the partition $\Delta_\mu$ with the properties
\[
\supp N_i^{(\mu)}=\Big[t_i^{(\mu)},t_{i+k_\mu}^{(\mu)}\Big]\quad\text{and}\quad N_i^{(\mu)}\geq 0\quad\text{and}\quad \sum_{i=1}^{n_\mu}N_i^{(\mu)}\equiv 1.
\]
The space that is spanned by those B-spline functions consists of piecewise polynomials $p$ of order $k_\mu$ with grid points
$\Delta_\mu$, which satisfy the following smoothness conditions at those grid points: if the point $t$ occurs $m$
times in $\Delta_\mu$, the function $p$ is $k_\mu-1-m$ times continuously differentiable at $t$. In particular, 
if $m=k_\mu$, then there is no smoothness condition at the point $t$.

The tensor product B-splines are defined as 
\[
N_{\bf i}(x_1,\dots,x_d):=  N_{i_1}^{(1)}(x_1)\cdots N_{i_d}^{(d)}(x_d),\qquad {\bf 1}\leq {\bf i}\leq {\bf n},
\]
where $\bf 1$ is the $d$-dimensional vector consisting of $d$ entries equal to one and where we say that ${\bf i}\leq {\bf n}$, provided that $i_\mu\leq n_\mu$ for all $\mu\in\{1,\dots,d\}$.
Furthermore, $P_{\bf \Delta}$ is defined to be the orthogonal projection
operator from $L_2[0,1]^d$ onto the linear span of the functions
$(N_{\bf i})_{{\bf 1}\leq {\bf i}\leq {\bf n}}$ with respect to the standard
inner product $\langle\cdot,\cdot\rangle$. This operator can be naturally extended to $L_1$-functions since B-splines are contained in $L_\infty$ (cf. Lemma \ref{lem:dirichlet} below). For $\mu\in\{1,\dots,d\}$, we define
the mesh width in the direction of $\mu$ by $|\Delta_\mu| :=\max_i \big|t_{i+1}^{(\mu)}-t_i^{(\mu)}\big|$ and the mesh width by
\[
|{\bf\Delta}|:=\max_{1\leq\mu\leq d} |\Delta_\mu|.
\]

\section{Almost everywhere convergence}\label{sec:proofs}

In this section, we prove Theorem \ref{thm:ae} on a.e. convergence. Its proof follows along the lines
of the 1-dimensional case proved in \cite{PassenbrunnerShadrin2014} and is based on the standard approach of verifying the following two conditions that imply a.e. convergence of $P_{\bf\Delta}f$ for $f\in L(\logplus L)^{d-1}$ (see \cite[pp. 3-4]{Garsia1970}):
\renewcommand{\theenumi}{\alph{enumi}}
\begin{enumerate}
	\item \label{enu:1} there is a dense subset $\mathcal F$ of $L(\logplus L)^{d-1}$ on which we have a.e. convergence,
	\item \label{enu:2} the maximal operator $P^*f:=\sup_{\bf\Delta} |P_{\bf\Delta}f|$ satisfies some weak type inequality.
\end{enumerate}
\renewcommand{\theenumi}{\roman{enumi}}
Let us now discuss the latter two conditions \eqref{enu:1} and \eqref{enu:2} in more detail. Concerning \eqref{enu:1}, we first note that for $d=1$, Shadrin proved in  \cite{Shadrin2001} that the 1-dimensional projection operator $P_{\Delta}$ is uniformly bounded on $L_\infty$ for any spline order $k$, i.e.,
\[
\|P_{\Delta}\|_\infty \leq c_{k},
\]
where the constant $c_{k}\in(0,\infty)$ depends only on $k$ and not on the partition $\Delta$. 
A direct consequence of this result and of the tensor structure of the underlying operator $P_{\bf\Delta}$ is that this assertion also holds in higher dimensions:
\begin{cor}\label{cor:uniformboundedness}
For any $d\in\bN$ there exists a constant $c_{d,\bf k}\in(0,\infty)$ that only depends on $d$ and ${\bf k}$ such that 
\[
\|P_{\bf\Delta}\|_\infty\leq c_{d,\bf k}.
\]
In particular, $c_{d,\bf k}$ is independent of the partitions $\bf\Delta$.
\end{cor}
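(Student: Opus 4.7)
The plan is to exploit the fact that $P_{\bf\Delta}$ factors as a composition of univariate spline projections, each of which is controlled by Shadrin's theorem. For $\mu = 1,\dots,d$, let $P_{\Delta_\mu}^{(\mu)}$ denote the operator which, given $f\in L^2([0,1]^d)$, acts as the one-dimensional orthogonal projection onto $\lin(N_i^{(\mu)}:1\leq i\leq n_\mu)$ in the $\mu$-th coordinate while leaving the other variables fixed. Since $L^2([0,1]^d)$ is the Hilbert tensor product of $d$ copies of $L^2([0,1])$ and the tensor product spline space is the algebraic tensor product of the univariate spline spaces, the orthogonal projection $P_{\bf\Delta}$ satisfies
\[
P_{\bf\Delta} = P_{\Delta_1}^{(1)}\circ P_{\Delta_2}^{(2)}\circ\cdots\circ P_{\Delta_d}^{(d)},
\]
where the partial projections in different coordinates commute. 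To verify this identity one checks that the right-hand side lands in $\lin(N_{\bf i})$ (clear, since each factor preserves the other variables) and that its range is orthogonal to the $L^2$-complement of that span; this last point is immediate by separation of variables in the inner product $\langle f,N_{\bf i}\rangle$.

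Granting this factorization, the $L^\infty$ bound reduces to iterated application of Shadrin's estimate $\|P_{\Delta_\mu}\|_\infty \leq c_{k_\mu}$. Fix $f \in L^\infty([0,1]^d)$. For a.e. $(x_2,\dots,x_d)$ the slice $f(\cdot,x_2,\dots,x_d)$ is an $L^\infty([0,1])$ function of norm at most $\|f\|_\infty$, so Shadrin's inequality gives
\[
\bigl| P_{\Delta_1}^{(1)} f(x_1,x_2,\dots,x_d)\bigr| \leq c_{k_1}\|f\|_\infty
\]
for a.e. $x_1$. Taking the essential supremum in all variables yields $\|P_{\Delta_1}^{(1)} f\|_\infty \leq c_{k_1}\|f\|_\infty$. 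Iterating the same argument in the remaining variables $\mu=2,\dots,d$ gives
\[
\|P_{\bf\Delta} f\|_\infty \leq \Bigl(\prod_{\mu=1}^d c_{k_\mu}\Bigr)\|f\|_\infty,
\]
so we may take $c_{d,{\bf k}}:=\prod_{\mu=1}^d c_{k_\mu}$, which depends only on $d$ and $\bf k$.

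The only non-bookkeeping ingredient is the factorization of $P_{\bf\Delta}$ into a composition of univariate partial projections, which is a standard fact about orthogonal projections onto algebraic tensor products of subspaces of a Hilbert tensor product; no new dimension-dependent idea beyond Shadrin's theorem is required.
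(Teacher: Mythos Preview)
Your argument is correct and is exactly the one the paper has in mind: the paper states the corollary as an immediate consequence of Shadrin's one-dimensional bound together with the tensor factorization $P_{\bf\Delta}=P_{\Delta_1}\cdots P_{\Delta_d}$ (made explicit later in Section~\ref{sec:proofs}), and you have simply written out the details of that iteration. Nothing further is needed.
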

This can be used to prove uniform convergence of $P_{\bf\Delta}g$ to $g$ for
continuous functions $g$, provided $|{\bf\Delta}|$ tends to zero:

\begin{prop}\label{prop:uniformconvergence}
Let $g\in C\big([0,1]^d\big)$. Then, as $|{\bf\Delta}|\to 0$,
\[
\|P_{\bf\Delta}g- g\|_\infty\to 0.
\]
\end{prop}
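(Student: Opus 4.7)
The plan is to combine the uniform $L^\infty$-bound of Corollary \ref{cor:uniformboundedness} with a direct tensor-product B-spline approximation argument. The key observation is that $P_{\bf\Delta}$ is a projection, so it reproduces every element of the tensor product spline space $\splinespace_{\bf\Delta}:=\lin\{N_{\bf i}:{\bf 1}\le {\bf i}\le {\bf n}\}$. Consequently, for any $s\in \splinespace_{\bf\Delta}$,
\[
P_{\bf\Delta}g-g = P_{\bf\Delta}(g-s)-(g-s),
\]
and Corollary \ref{cor:uniformboundedness} gives
\[
\|P_{\bf\Delta}g-g\|_\infty \le (1+c_{d,{\bf k}})\,\|g-s\|_\infty.
\]
So it suffices to exhibit, for each partition $\bf\Delta$, an $s_{\bf\Delta}\in\splinespace_{\bf\Delta}$ whose uniform distance from $g$ tends to $0$ as $|{\bf\Delta}|\to 0$.

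For this, I would use the standard Schoenberg-type quasi-interpolant built from the partition of unity. For each $\mu$ and each $1\le i\le n_\mu$, pick any sample point $\xi_i^{(\mu)}\in[t_i^{(\mu)},t_{i+k_\mu}^{(\mu)}]=\supp N_i^{(\mu)}$, set ${\bf \xi}_{\bf i}:=(\xi_{i_1}^{(1)},\dots,\xi_{i_d}^{(d)})$, and define
\[
s_{\bf\Delta}(x):=\sum_{{\bf 1}\le {\bf i}\le {\bf n}} g({\bf\xi}_{\bf i})\,N_{\bf i}(x).
\]
Since the $N_i^{(\mu)}$ sum to $1$ in each coordinate, the tensor B-splines $N_{\bf i}$ also sum to $1$, so
\[
|s_{\bf\Delta}(x)-g(x)| \le \sum_{{\bf 1}\le{\bf i}\le{\bf n}} |g({\bf\xi}_{\bf i})-g(x)|\,N_{\bf i}(x).
\]
Only those $\bf i$ with $x\in\supp N_{\bf i}$ contribute, and for such $\bf i$ one has $|x_\mu-\xi_{i_\mu}^{(\mu)}|\le k_\mu|\Delta_\mu|\le k_\mu|{\bf\Delta}|$ for every $\mu$. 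Thus
\[
|s_{\bf\Delta}(x)-g(x)| \le \omega\bigl(g,\max_\mu k_\mu\,|{\bf\Delta}|\bigr),
\]
where $\omega(g,\cdot)$ denotes the modulus of continuity of $g$ with respect to the $\ell^\infty$-distance on $[0,1]^d$.

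Since $g\in C([0,1]^d)$ is uniformly continuous on the compact cube, $\omega(g,\delta)\to 0$ as $\delta\to 0$, so $\|g-s_{\bf\Delta}\|_\infty\to 0$ as $|{\bf\Delta}|\to 0$, and combining with the displayed estimate above finishes the proof. There is no genuine obstacle here: the only point requiring a little care is verifying the support-diameter bound $k_\mu|{\bf\Delta}|$ for $N_i^{(\mu)}$, which follows directly from $\supp N_i^{(\mu)}=[t_i^{(\mu)},t_{i+k_\mu}^{(\mu)}]$ and the mesh-size definition.
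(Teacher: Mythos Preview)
Your proof is correct and follows the same overall strategy as the paper: both use the projection property together with Corollary~\ref{cor:uniformboundedness} to obtain the Lebesgue inequality $\|P_{\bf\Delta}g-g\|_\infty\le(1+c_{d,{\bf k}})\,E_{\bf\Delta}(g)$, and then show that the best approximation error $E_{\bf\Delta}(g)$ tends to zero. The difference lies only in this last step. The paper quotes a general Jackson-type estimate from \cite[Chapter~12]{Schumaker1981} bounding $E_{\bf\Delta}(g)$ by a sum of directional moduli of smoothness, whereas you give a fully self-contained argument via the Schoenberg quasi-interpolant $s_{\bf\Delta}=\sum_{\bf i}g({\bf\xi}_{\bf i})N_{\bf i}$ and the partition-of-unity property, obtaining directly $\|g-s_{\bf\Delta}\|_\infty\le\omega(g,\max_\mu k_\mu|{\bf\Delta}|)$. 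Your route is more elementary and avoids external references; the paper's route yields a sharper bound (order $k_\mu$ moduli of smoothness rather than the first modulus of continuity), but that extra precision is not needed here since only convergence to zero is required.
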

 
Therefore, we may choose $\mathcal{F}$ to be the space of continuous functions on $[0,1]^d$, which is dense in $L(\logplus L)^{d-1}$ (see, e.g., \cite[Chapter 7]{Kosmol}).

We now turn to the discussion of condition \eqref{enu:2} and define the strong maximal function ${\rm M_S}f$ of $f\in L_1[0,1]^d$ by
\[
{\rm M_S}f(x):=\sup_{I\ni x}\,\frac{1}{|I|}\int_I |f(y)|\dif y,\qquad x\in [0,1]^d,
\]
where the supremum is taken over all $d$-dimensional rectangles $I\subset [0,1]^d$, which are parallel to the coordinate axes and contain the point $x$.
The strong maximal function satisfies the weak type inequality 
\begin{equation}\label{eq:weaktype}
|\{x: {\rm M_S} f(x)>\lambda\}|\leq c_M \int_{[0,1]^d}\frac{|f(x)|}{\lambda}\bigg(1+\logplus\frac{|f(x)|}{\lambda}\bigg)^{d-1}\dif x,
\end{equation}
where $|A|$ denotes the $d$-dimensional Lebesgue measure of the set $A$ and $c_M\in(0,\infty)$ is a constant independent of $f$ and $\lambda$ (see, for instance, \cite{deGuzman1973} and \cite[Chapter 17]{Zygmund2002}).
In order to get this kind of weak type inequality for the maximal operator $P^*$,
we prove the following pointwise estimate for $P_{\bf\Delta}$ by the strong maximal function:
\begin{prop}\label{thm:pointwiseprojection}
There exists a constant $c\in(0,\infty)$ that only depends on the dimension $d$ and the spline orders ${\bf k}$ such that, for all $f\in L_1[0,1]^d$, $x\in [0,1]^d$ and all partitions $\bf\Delta$,
\[
|P_{\bf\Delta} f(x)|\leq c\cdot {\rm M_S}f(x).
\]
\end{prop}
We will now present the proof Theorem \ref{thm:ae} and defer the proofs of Propositions \ref{prop:uniformconvergence} and
\ref{thm:pointwiseprojection}.

\begin{proof}[Proof of Theorem \ref{thm:ae}]
Let $f\in L(\logplus L)^{d-1}$ and define
\[
R(f,x):=\limsup_{|{\bf\Delta}|\to 0} P_{\bf\Delta} f(x) - \liminf_{|{\bf\Delta}|\to 0} P_{\bf\Delta}f(x).
\]
Let $g\in C\big([0,1]^d\big)$. Since, by Proposition \ref{prop:uniformconvergence}, $R(g,x)\equiv 0$ for continuous functions $g$, and because $P_{\bf\Delta}$ is a linear operator,
\[
R(f,x)\leq R(f-g,x)+R(g,x)=R(f-g,x).
\]
Let $\delta>0$. Then, by Proposition \ref{thm:pointwiseprojection}, we have
\begin{align*}
|\{x: R(f,x)>\delta\}| &\leq |\{x: R(f-g,x)>\delta\}| \\
& \leq |\{x: 2c\cdot {\rm M_S}(f-g)(x)>\delta\}|.
\end{align*}
Now we employ the weak type inequality \eqref{eq:weaktype} for ${\rm M_S}$ to find
\begin{align*}
|\{x: R(f,x)>\delta\}| &\leq \\
c_M \int_{[0,1]^d}&\frac{2c\cdot|(f-g)(x)|}{\delta}\bigg(1+\logplus\frac{2c\cdot |(f-g)(x)|}{\delta}\bigg)^{d-1}\dif x.
\end{align*}
By assumption, the expression on the right-hand side of the latter display is finite.
Choosing a suitable sequence of continuous functions $(g_n)$ (first approximate
$f$ by a bounded function and then apply Lusin's theorem), the above expression tends to zero and we obtain
\[
|\{x: R(f,x)>\delta\}|=0.
\]
Since $\delta>0$ is arbitrary, $R(f,x)=0$ for a.e. $x\in [0,1]^d$. This means that $P_{\bf\Delta}f$ converges almost everywhere as $|{\bf\Delta}|\to 0$. It remains to show that this limit equals $f$ a.e.. This is obtained by a similar argument as above replacing $R(f,x)$ by $|\lim_{|{\bf\Delta}|\to 0} P_{\bf\Delta} f(x)-f(x)|$.
\end{proof}
The rest of this section is devoted to the proofs of Propositions
\ref{prop:uniformconvergence} and \ref{thm:pointwiseprojection}.

\begin{proof}[Proof of Proposition \ref{prop:uniformconvergence}]
By Corollary \ref{cor:uniformboundedness}, $P_{\bf\Delta}$ is a bounded projection operator and so, for all functions $h$ in the range of $P_{\bf\Delta}$, we have
	\begin{equation*}
		\|P_{\bf\Delta}g -g \|_\infty\leq \|P_{\bf\Delta}(g-h) \|_\infty + \|h-g\|_\infty\leq
		(1+c_{d,\bf k}) \|g-h\|_\infty.
	\end{equation*}
Taking the infimum over all such $h$,
\begin{equation}\label{eq:uniformconv} 
\|P_{\bf\Delta}g-g\|_\infty\leq (1+c_{d,{\bf k}})\cdot E_{\bf\Delta}(g),
\end{equation}
where $E_{\bf\Delta}(g)$ is the error of best approximation of $g$ by splines in
the span of tensor product B-splines $(N_{\bf i})_{{\bf 1}\leq{\bf i}\leq {\bf
n}}$. It is known that
\[
E_{\bf\Delta}(g)\leq c\cdot\sum_{\mu=1}^d \sup_{h_\mu\leq |\Delta_{\mu}|}\sup_{x} |(D_{h_\mu}^{k_\mu}g_{\mu,x})(x_\mu)|,
\]
where $g_{\mu,x}(s):=g(x_1,\dots,x_{\mu-1},s,x_{\mu+1},\dots,x_d)$ and $D_{h_\mu}$ is the forward difference operator with step size $h_\mu$ (see, for instance, \cite[Theorem 12.8 and Example 13.27]{Schumaker2007}). This is the sum of moduli of smoothness in each direction $\mu$ of the function $g$ with respect to the mesh diameters $|\Delta_1|,\dots,|\Delta_d|$, respectively. As these diameters tend to zero, the right-hand side of the above display also tends to zero since $g$ is continuous. Together with \eqref{eq:uniformconv} this proves the assertion of the proposition.
\end{proof}

Next we present the proof of Proposition \ref{thm:pointwiseprojection}. It is essentially a consequence of a pointwise estimate involving the Dirichlet kernel of the projection operator
	$P_{\bf\Delta}$. With the notation
	\begin{equation*}
		I_i^{(\mu)} :=  \Big[t_i^{(\mu)}, t_{i+1}^{(\mu)}\Big],\qquad
		I_{ij}^{(\mu)} := \conv\Big(I_i^{(\mu)}, I_j^{(\mu)}\Big), \qquad
		\mu\in\{1,\ldots,d\},
	\end{equation*}
	its 1-dimensional version,  where we suppress the superindex $(\mu)$,
reads as follows:
	\begin{lem}[\cite{PassenbrunnerShadrin2014}, Lemma 2.1]\label{lem:dirichlet}
		Let $K_{\Delta}$ be the Dirichlet kernel of the projection
		operator $P_{\Delta}$, i.e., $K_{\Delta}$ is defined by
		the equation
		\begin{equation*}
			P_{\Delta}f(x) = \int_0^1 K_{\Delta}(x,y)f(y)\dif
			y,\qquad f\in L_1[0,1],\, x\in [0,1].
		\end{equation*}
		Then $K_{\Delta}$ satisfies the inequality
		\begin{equation*}
			|K_{\Delta}(x,y)| \leq C \gamma^{|i-j|}
			|I_{ij}|^{-1},\qquad x\in I_i,\, y\in
			I_j,
		\end{equation*}
		where $C\in(0,\infty)$ and $\gamma\in (0,1)$ are constants that depend only
		on the spline order $k$.
	\end{lem}

\begin{proof}[Proof of Proposition \ref{thm:pointwiseprojection}]
	We first note that the estimate given in Lemma \ref{lem:dirichlet}
	carries over to the Dirichlet kernel $K_{\bf\Delta}$ of $P_{\bf\Delta}$
	for dimension $d$, which is defined by the relation  
\begin{equation}\label{eq:projdirichlet}
P_{\bf \Delta}f(x)=\int_{[0,1]^d} K_{\bf \Delta}(x,y)f(y)\dif y,\qquad f\in L_1[0,1]^d,\, x\in [0,1]^d.
\end{equation}
Indeed, since $P_{\bf\Delta}$ is the tensor product of the
1-dimensional projections $P_{\Delta_1},\ldots,P_{\Delta_d}$, the
Dirichlet kernel $K_{\bf\Delta}$ is the product of the 1-dimensional Dirichlet
kernels $K_{\Delta_1},\ldots, K_{\Delta_d}$. Thus, Lemma \ref{lem:dirichlet} implies the inequality
\begin{equation}\label{eq:ddimbound}
|K_{\bf\Delta}(x,y)|\leq C \gamma^{|{\bf i}-{\bf j}|_1}|I_{\bf ij}|^{-1},\qquad x\in I_{\bf i},\,y\in I_{\bf j},
\end{equation}
where we set 
\begin{equation*}
	|{\bf i}-{\bf j}|_1 := \sum_{\mu=1}^d |i_\mu - j_\mu|,\qquad
	I_{\bf i} := \prod_{\mu=1}^d I_i^{(\mu)},\qquad I_{\bf ij} :=
	\prod_{\mu=1}^d
	I_{ij}^{(\mu)},
\end{equation*}
and $C\in(0,\infty)$ and $\gamma\in (0,1)$ are constants only depending on $d$ and
$\bf k$.


Let $x\in [0,1]^d$ and $\bf i$ be such that $x\in I_{\bf i}$ and $|I_{\bf i}|>0$. 
By Equation \eqref{eq:projdirichlet},
\begin{align*}
|P_{\bf\Delta}f(x)|= \bigg|\int_{[0,1]^d}K_{\bf\Delta}(x,y)f(y)\dif y\bigg| =
\bigg| \sum_{\bf 1 \leq \bf j\leq \bf n} \int_{I_{\bf j}}
K_{\bf\Delta}(x,y)f(y)\dif y\bigg|.
\end{align*}
Using estimate \eqref{eq:ddimbound} on the Dirichlet kernel, we obtain 
\begin{align*}
|P_{\bf\Delta}f(x)|\leq C\sum_{{\bf 1}\leq{\bf j}\leq{\bf n}} \frac{\gamma^{|{\bf i}-{\bf j}|_1}}{|I_{\bf ij}|} \int_{I_{\bf j}} |f(y)|\dif y,
\end{align*}
where $C\in(0,\infty)$ is the constant in \eqref{eq:ddimbound}. 
Since $I_{\bf j}\subset I_{\bf ij}$ and $x\in I_{\bf i}\subset I_{\bf ij}$, we conclude
\[
|P_{\bf\Delta}f(x)|\leq C \sum_{{\bf 1}\leq{\bf j}\leq{\bf n}} \gamma^{|{\bf i}-{\bf j}|_1} {\rm M_S}f(x),
\]
which, after summing a geometric series, concludes the proof. 
\end{proof}

\section{Optimality of the result}\label{sec:negative}
In this section, we prove the optimality result, Theorem
\ref{thm:saks-extension}. The choice of the function $\varphi$ is based
on the following result of Saks \cite{Saks1935}:
\begin{thm}\label{thm:saks_version1}
For any function $\sigma: [0,\infty)\to[0,\infty)$ with $\liminf_{t\to\infty}\sigma(t)=0$ there exists a non-negative function $\varphi:=\varphi_\sigma$ on $[0,1]^d$ such that
\begin{enumerate}
\item the function $\sigma(\varphi)\cdot\varphi\cdot(\logplus \varphi)^{d-1}$ is integrable,
\item for all $x\in [0,1]^d$,
	\begin{equation*}
		\limsup_{\diam I\to 0,\, I\ni
		x}\frac{1}{|I|}\int_{I}\varphi(y)\dif y = \infty,
	\end{equation*}
	where $\limsup$ is taken over all $d$-dimensional rectangles $I$, which
	are parallel to the coordinate axes and contain the point $x$.
\end{enumerate}
\end{thm}
We will show that the same function $\varphi$, constructed in the proof of the previous theorem, also has the properties stated in Theorem \ref{thm:saks-extension}. 
The definition of $\varphi$ rests on a construction due to H.~Bohr that appears
in the first edition of \cite[pp.
689-691]{Caratheodory1927} from $1918$ for dimension $d=2$. 
Let us begin by recalling Bohr's construction and Saks' definition of the function $\varphi$.

\subsection*{Bohr's construction}
Let $N\in\mathbb N$ and $S:=[a_1,b_1]\times [a_2,b_2]\subset\mathbb R^2$ be a
rectangle. Using the splitting parameter $N$, we define subsets of this rectangle as follows: 
\[
I_j^{(1)}:= \Big[a_1,a_1+\frac{j(b_1-a_1)}{N}\Big]\times \Big[a_2,a_2+\frac{b_2-a_2}{j}\Big],\qquad 1\leq j\leq N.
\]
The part $S\setminus\bigcup_{j=1}^N I_j^{(1)}$ consists of $N-1$ disjoint rectangles to which we apply the same splitting as we did with $S$ (see Figure \ref{fig:In}). This procedure is carried out until the area of the remainder is less than $|S|/N^2$. The remainder is again a disjoint union of rectangles $J^{(1)},\dots,J^{(r)}$. Thus, we obtain a sequence of rectangles whose union is $S$,
\begin{equation}\label{eq:bohr1}
I_1^{(1)},\ldots, I_N^{(1)}\ ;\ I_1^{(2)},\ldots, I_N^{(2)}\ ;\ \cdots\ ;\ I_1^{(s)},\ldots, I_N^{(s)}\ ;\ J^{(1)},\ldots, J^{(r)}.
\end{equation}

\begin{figure}[ht]
\begin{center}
\begin{tikzpicture}[scale=1.5]
\draw (0,0) rectangle (5,5) ;
\draw (0,0) rectangle (1,5) node[anchor=north east]{$I_1^{(1)}$};
\draw (0,0) rectangle (2,2.5) node[anchor=north east]{$I_2^{(1)}$};
\draw (0,0) rectangle (3,1.67) node[anchor=north east]{$I_3^{(1)}$};
\draw (0,0) rectangle (4,1.25) node[anchor=north east]{$I_4^{(1)}$};
\draw (0,0) rectangle (5,1) node[anchor=north east]{$I_5^{(1)}$};
\draw[pattern=north west lines] (0,0) rectangle (1,1);
\node at (0,0.5)[anchor=east]{$\delta^{(1)}$};
\draw[dashed] (2,2.5)--(2,5);
\draw[dashed] (3,1.67)--(3,5);
\draw[dashed] (4,1.25)--(4,5);
\draw (1,2.5) rectangle (1.2,5) node[anchor=north west]{$I_1^{(2)}$};
\draw (1,2.5) rectangle (1.4,3.75) node[inner sep=1pt, anchor=south west]{$I_2^{(2)}$};
\draw (1,2.5) rectangle (1.6,3.33) node[inner sep=1pt, anchor=south west]{$I_3^{(2)}$};
\draw (1,2.5) rectangle (1.8,3.125) node[inner sep=1pt, anchor=south west]{$I_4^{(2)}$};
\draw (1,2.5) rectangle (2,3) node[anchor=west]{$I_5^{(2)}$};
\draw[pattern=north west lines] (1,2.5) rectangle (1.2,3);
\node at (1,2.75)[anchor=east]{$\delta^{(2)}$};
\end{tikzpicture}
\end{center}
\caption{First sets in the enumeration \eqref{eq:bohr1} for $N=5$.}
\label{fig:In}
\end{figure}
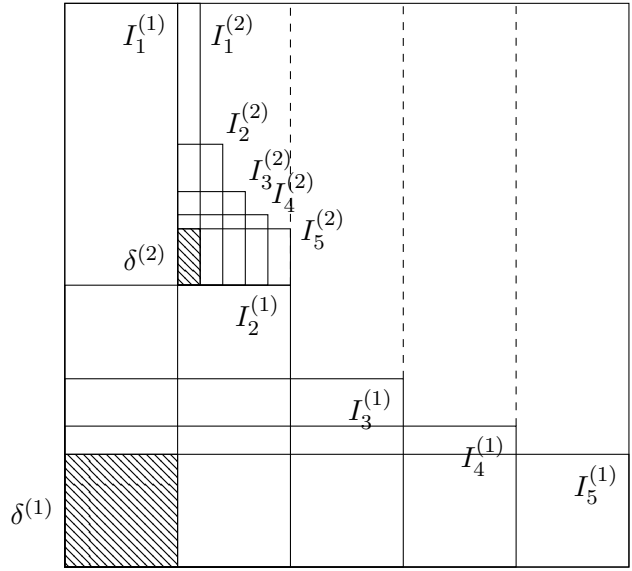

We can generalize this construction to arbitrary dimensions $d$ as follows:
first, notice that the corners of the rectangles $I_j^{(1)}$, $1\leq j\leq N$, lie on the curve
$(x-a_1)(y-a_2)=(b_1-a_1)(b_2-a_2)/N=|S|/N$. Given a rectangle
$S:=[a_1,b_1]\times\cdots \times [a_d,b_d]$, $d>2$, we consider rectangles
similar to $I_j^{(1)}$ whose
corners lie on the variety $(x_1-a_1)(x_2-a_2)\cdots (x_d-a_d)=|S|/N^{d-1}$. For
$a_1=\cdots =a_d=0$ and $b_1=\cdots = b_d=1$, we
can write those rectangles using $d-1$ parameters as
\begin{equation*}
	I_{j_1,\ldots, j_{d-1}} := \Big[0,\frac{j_1}{N}\Big] \times \cdots \times \Big[0,\frac{j_{d-1}}{N}\Big]
	\times \Big[0,\frac{1}{j_1\cdots j_{d-1}}\Big]
\end{equation*}
for $1\leq j_1,\ldots,j_{d-1}\leq N.$
The volume of the union over all those sets is approximately 
\begin{equation*}
\bigg(	\frac{\log N}{N}\bigg)^{d-1},
\end{equation*}
which can be seen by integration of the function $x_d = \big( N^{d-1} x_1\cdots
x_{d-1}\big)^{-1}$ over the rectangle $[1/N,1]^{d-1}$.
In what follows, it is important that in Bohr's construction we only 
choose those rectangles  $I_{j_1,\ldots,j_{d-1}}$ 
for which the product $j_1\cdots j_{d-1}$ is less than or equal to $N$
so that the volume $V_1$ of their union is still
approximately $N^{1-d}\log ^{d-1}N$ while the volume $V_2$ of their intersection equals
$N^{-d}$. Therefore, the quotient $V_1/V_2$ is of the order $N\log^{d-1} N$. This
is crucial for the construction of the function $\varphi$ in Theorem
\ref{thm:saks_version1}.

The function $\varphi$ from Theorem \ref{thm:saks_version1} is
	constructed in 
	\cite{Saks1935} in such a way that 
it satisfies the following additional properties:
\begin{thm}\label{thm:saks_version2}
	The function $\varphi$ from Theorem \ref{thm:saks_version1} can be chosen in such a way that
	there exist
	a sequence $(\varepsilon_i)_{i\in\mathbb N}\in(0,\infty)^{\mathbb N}$ and a sequence $(\mathcal{C}_i)_{i\in\mathbb N}$ of rectangular coverings of $[0,1]^d$ such that
\begin{enumerate}
\item the function $\sigma(\varphi)\cdot\varphi\cdot(\logplus\varphi)^{d-1}$ is integrable,
\item the sequence $(\varepsilon_i)_{i\in\mathbb N}$ converges to $0$,
\item for each $i\in\mathbb N$, $\mathcal{C}_i=(R_{ij})_{j=1}^{M_i}$ with
	$\bigcup_{j=1}^{M_i}R_{ij}=[0,1]^d$ we have $\diam R_{ij}< 1/i$ and 
\[
\frac{1}{|R_{ij}|} \int_{R_{ij}} \varphi(x) \dif x > \varepsilon_i^{-1},\qquad
\text{for all } j\in \{1,\ldots, M_i\},
\]
\item for each $i\in \mathbb N$ there exist $L_i,N_i\in\mathbb
	N$ and 
	a partition $(S_{ij})_{j = 1}^{L_i}$ of the unit cube
	$[0,1]^d$ consisting of rectangles with diameter $\leq 1/i$ such that for all
	$j\in\{1,\ldots, L_i\}$, the
	subcollection of rectangles in $\mathcal C_i$
	that intersect $S_{ij}$ is given by the rectangles in \eqref{eq:bohr1} (or its higher
	dimensional analogue)
	corresponding to $S_{ij}$ and the splitting parameter $N_i$.
\end{enumerate}
\end{thm}

Let $P_I$ be the orthogonal projection operator onto the space of $d$-variate
polynomials of order $(k_1,k_2,\ldots,k_d)$ on the
rectangle $I$. We now use Remez' inequality to prove that $|P_I\varphi|$ is large on a large subset of $I$ as long as
$\frac{1}{|I|}\int_I \varphi\dif y$ is large enough. This is the first important step in proving (ii) of Theorem \ref{thm:saks-extension}. 
\begin{lem}\label{prop:projpointwise}
Let $I\subset \mathbb R^d$ be a rectangle. Then, there exists a constant $c_{\bf
k}\in(0,\infty)$ only
depending on the polynomial orders ${\bf k}=(k_1,\ldots k_d)$ so that for all
positive functions $f$ on $I$ there exists a subset $A\subset I$
with measure $|A| \geq |I|/2$ such that, for all $x\in A$,
\begin{equation*}
	|P_I f(x)| \geq  \frac{c_{\bf k}}{|I|} \int_I f(y)\dif y.
\end{equation*}
\end{lem}
\begin{proof}
$P_I$ is the orthogonal projection operator onto the space of $d$-variate
polynomials of order $(k_1,k_2,\ldots k_d)$ on $I$. Therefore, the characteristic function $\chi_I$ is 
contained in the range of $P_I$ and  we have
\begin{equation*}
\langle P_If, \chi_I\rangle=\langle f,\chi_I\rangle.
\end{equation*}
Hence, in fact, $\| P_I f\|_{L_\infty(I)} \geq |I|^{-1}\int_I
f(y)\dif y$.
Consequently, Corollary \ref{cor:remez} implies the assertion.
\end{proof}

Considering the properties of $\varphi$ in Theorem \ref{thm:saks_version2}, the
previous proposition applied to $\varphi$ shows that for any element $I\in \mathcal C_i$, there exists a
subset $A:=A(I)\subset I$ with measure $\geq |I|/2$, on which $|P_I \varphi|\geq
c/\varepsilon_i$ for a constant $c\in(0,\infty)$ only depending on the polynomial orders
$(k_1,\ldots,k_d)$. In  Lemma 
\ref{lem:Ajbig}, we ensure that the union over those sets
$A$ still has large enough measure relatively to the measure of the union over all $I\in
\mathcal C_i$. In order to obtain that, we will use the special structure indicated by
	Bohr's construction as well as (iv) of Theorem \ref{thm:saks_version2}.

\begin{lem}\label{lem:Ajbig}
	For all $j_1,\ldots, j_{d-1}\in \{1,\ldots, N\}$, let
	\begin{equation*}
		I_{j_1,\ldots, j_{d-1}} =\Big[0,\frac{j_1}{N}\Big] \times \cdots \times \Big[0,\frac{j_{d-1}}{N}\Big]
	\times \Big[0,\frac{1}{j_1\cdots j_{d-1}}\Big]
	\end{equation*}
	and $\Lambda = \{(j_1,\ldots, j_{d-1}) : j_1\cdots j_d\leq N\}$.
	For $\lambda\in \Lambda,$ let $A_\lambda\subset I_\lambda$ be a Borel
	measurable subset of $I_\lambda$ so that 
	\begin{equation*}
		|A_\lambda|\geq c|I_\lambda| = \frac{c}{N^{d-1}}
	\end{equation*}
	for some absolute constant $c\in(0,\infty)$. Then there exist constants
	$c_1,c_2\in(0,\infty)$
	that depend only on $c$ and $d$ so that 
	\begin{equation*}
		\Big| \bigcup_{\lambda\in\Lambda} A_\lambda\Big| \geq c_2
		\Big(\frac{\log N}{N}\Big)^{d-1}\geq c_1 \Big|
		\bigcup_{\lambda\in\Lambda} I_\lambda\Big|.
	\end{equation*}
\end{lem}
\begin{proof}
	Let $M\in\mathbb N$ to be specified later and define $q:=1/M$. Define the index set 
	\begin{equation*}
		\Gamma = \{ (M^{k_1}, \ldots, M^{k_{d-1}}) \in \Lambda :
		k_1,\ldots, k_{d-1}\in \mathbb N_0\}.
	\end{equation*}
	Then, we can estimate
	\begin{align*}
		\Big| \bigcup_{\lambda\in\Lambda} A_\lambda\Big| \geq 
		\Big| \bigcup_{\lambda\in\Gamma} A_\lambda\Big| &\geq
		\sum_{\lambda\in\Gamma} |A_\lambda| - \frac{1}{2}
		\sum_{\substack{\lambda,\mu\in\Gamma\\ \lambda\neq \mu}}
		|A_{\lambda} \cap A_\mu| \\
		&\geq c \sum_{\lambda\in\Gamma} |I_\lambda| - \frac{1}{2}
		\sum_{\substack{\lambda,\mu\in\Gamma\\ \lambda\neq \mu}}
		|I_{\lambda} \cap I_\mu|.
	\end{align*}
	Now we observe that $\card[\Gamma]=\card[\{k\in \mathbb N_0^s : \sum_{j=1}^s
	k_j\leq L\}]={\lfloor L\rfloor+s\choose s}$ where $L=\log_M N$, $s=d-1$
	and $\lfloor L\rfloor$ denotes the largest integer smaller than or
	equal to $L$.
	Therefore, $\card[\Gamma]\geq C_s \log_M^{d-1} N$ for some positive constant
	$C_s$ depending only on $s$.
	Thus,
	\begin{equation}\label{eq:lowermeasurestimate}
		\Big| \bigcup_{\lambda\in\Lambda} A_\lambda\Big| \geq c \cdot C_{d-1} 
		\Big(\frac{\log_M N}{N}\Big)^{d-1} -
		\frac{1}{2}\sum_{\substack{\lambda,\mu\in\Gamma\\ \lambda\neq \mu}}
		|I_{\lambda} \cap I_\mu|.
	\end{equation}
	Next, observe that if $\lambda,\mu\in\Gamma$ have the form
	$\lambda = (M^{\ell_1},\ldots,M^{\ell_{d-1}})$ and
	$\mu = ( M^{m_1},\ldots, M^{m_{d-1}} )$,
	\begin{equation*}
		|I_\lambda \cap I_\mu| = N^{1-d}q^{\sum_{i=1}^{d-1} \big(
	\max(\ell_i,m_i) - \min(\ell_i,m_i)\big)},
	\end{equation*}
	which, by summing geometric series and noting that the condition $\lambda\neq
	\mu$ implies the existence of  at least one index $i\in\{1,\ldots,d-1\}$
	so that $\lambda_i\neq \mu_i$, yields
	\begin{equation*}
		\sum_{\substack{\lambda,\mu\in\Gamma \\ \lambda\neq \mu}}
		|I_\lambda\cap I_\mu| \leq
		\frac{q}{(1-q)^{d-1}}\sum_{\lambda\in\Gamma} N^{1-d} \leq
		\frac{q}{(1-q)^{d-1}} \Big(\frac{\log_M N}{N}\Big)^{d-1}.
	\end{equation*}
	Inserting this inequality in \eqref{eq:lowermeasurestimate}, we obtain
	\begin{equation*}
		\Big| \bigcup_{\lambda\in\Lambda}A_\lambda\Big| \geq \Big(
		c\cdot C_{d-1}
		- \frac{q}{2(1-q)^{d-1}}\Big)\cdot \Big(\frac{\log_M
		N}{N}\Big)^{d-1}.
	\end{equation*}
	We can choose $M=1/q$ (depending only on $c$ and $d$) sufficiently large
	to guarantee that $c \cdot C_{d-1}
	- \frac{q}{2(1-q)^{d-1}}\geq c\cdot C_{d-1}/2$. Then the assertion of
	the lemma follows
	with the choice $c_2 = c\cdot C_{d-1}/(2 \log^{d-1} M)$.
\end{proof}

Bringing together the above facts, we are now able to prove our optimality result:
\begin{proof}[Proof of Theorem \ref{thm:saks-extension}] We subdivide the proof
	into two parts. In the first part, we show that for all
		points $x$ in a set of positive measure
	there exists a sequence
	$(I_n)$ of intervals containing $x$ whose
	measure tends to zero and such that $|P_{I_n}\varphi(x)|\to\infty$.
Based on that observation, we construct the desired sequence of partitions in
the second step.
\vskip 1mm
\noindent\textbf{Step 1:}
Since Theorem \ref{thm:saks_version1} proves the integrability condition (i) of Theorem \ref{thm:saks-extension}, we only
need to prove (ii), i.e., the existence of a set $B\subset [0,1]^d$ with positive Lebesgue measure and of a
sequence $({\bf\Delta}_n)$ of partitions such that for all $x\in B$, $\limsup_{n\to\infty}
|P_{{\bf\Delta}_n}\varphi(x)|=\infty$.
We fix $i\in\mathbb N$ and consider the corresponding covering 
$\mathcal C_i$ of $[0,1]^d$ from Theorem \ref{thm:saks_version2}. 
Then, we define 
\begin{align*}
	B_i:=\big\{x\in[0,1]^d\,:\, &\text{there exists a rectangle $I\in\mathcal C_i$ with $x\in I$}\\
	&\text{and $|P_I\varphi(x)|\geq c_{\bf k}/\varepsilon_i$}\big\},
\end{align*}
where $c_{\bf k}\in(0,\infty)$ is the constant that appears in Lemma \ref{prop:projpointwise} and $(\varepsilon_i)$ is the
sequence from Theorem \ref{thm:saks_version2}. Recall that $\varepsilon_i\to 0$ as $i\to\infty$. We will show that
$|B_i|\geq c>0$ for all $i\in\mathbb N$ and some suitable constant $c\in(0,\infty)$.

Let $I\in\mathcal C_i$.  Due to Theorem \ref{thm:saks_version2}, we have $\diam
I\leq 1/i$ and
\[
\frac{1}{|I|}\int_I \varphi \dif x\geq \varepsilon_i^{-1}.
\]
Thus, Lemma \ref{prop:projpointwise} provides a set $A(I)\subset I$ with
$|A(I)|\geq |I|/2$ such that, for all $x\in
A(I)$,
\[
	|P_I\varphi(x)|\geq \frac{c_{\bf k}}{\varepsilon_i}\,. 
\]
This means that $A(I) \subset B_i$.
For fixed $j$ let $(I_{m}^{(\ell)}), (J^{(\ell)})$ be the collections of 
rectangles \eqref{eq:bohr1} contained in $\mathcal C_i$ 
forming a covering of $S_{ij}$ (see Theorem \ref{thm:saks_version2}, part (iv)).
As a consequence of the latter bound,  Lemma \ref{lem:Ajbig} and the fact
that the rectangles $J^{(\ell)}$ are disjoint, we find
\begin{align*}
	|S_{ij}\cap B_i|&\geq
	\sum_{\ell}\Big|\bigcup_{m=1}^{N_i}A(I_{m}^{(\ell)})\Big|+\sum_{\ell}
	|A(J^{(\ell)})| \\
	&\geq c_1
	\sum_{\ell}\Big|\bigcup_{m=1}^{N_i}I_{m}^{(\ell)}\Big|+\frac{1}{2}\sum_{\ell}
	\big|J^{(\ell)}\big|\geq c_2\,|S_{ij}|\,, 
\end{align*}
where $c_2:=\min\big\{c_1,\frac{1}{2}\big\}$. Consequently, 
\begin{align*}
	|B_i|=\sum_{j=1}^{L_i} |S_{ij}\cap B_i|\geq c_2\sum_{j=1}^{L_i} |S_{ij}|=c_2\, \big|[0,1]^d\big|=c_2\,.
\end{align*}
Since all sets $B_i$ satisfy this uniform lower bound, the set $B:= \limsup_n B_n$ has positive measure as well, because
\[
|B|=\lim_n\bigg|\bigcup_{m\geq n} B_m\bigg|\geq \limsup_n |B_n|\geq c>0\,.
\]
\vskip 1mm
\noindent\textbf{Step 2:} We now proceed with the construction of the desired sequence of partitions $({\bf\Delta}_n)$. 
	Let $(R_{ij})_{j=1}^{M_i}$ be the rectangles contained
	in the collection $\mathcal C_i$.
	For $1\leq j\leq M_i$, we define 
	the partition ${\bf\Delta}^{(i,j)}= (\Delta_1^{(i,j)},\ldots, \Delta_d^{(i,j)})$
	such that each $R_{ij}$ is a grid
	point interval of ${\bf\Delta}^{(i,j)}$ and, for $\mu\in\{1,\ldots, d\}$,
	the $\mu$-th coordinate projection of the vertices of
	$R_{ij}$ has
	multiplicity $k_\mu$ in the partition $\Delta_\mu^{(i,j)}$. We give this multiplicity condition in order
	to have, for all $x\in R_{ij}$, 
	\begin{equation*}
		P_{{\bf \Delta}^{(i,j)}} f(x) = P_{R_{ij}} f(x), \qquad f\in L_1[0,1]^d.
	\end{equation*}
	Other knots of the partition ${\bf\Delta}^{(i,j)}$ 
	are chosen arbitrarily, with the only condition 
	$|{\bf\Delta}^{(i,j)}|\leq 1/i$. Observe that this is possible since
	$\diam R_{ij}\leq
	1/i$. Now we define the sequence $({\bf\Delta}_n)$ as 
	\begin{equation*}
		({\bf\Delta}_n) := \big({\bf\Delta}^{(1,1)},\ldots,
		{\bf\Delta}^{(1,M_1)},{\bf\Delta}^{(2,1)},\dots,{\bf\Delta}^{(2,M_2)},\dots\big).
	\end{equation*}
	Observe that this sequence of partitions is not nested. 
	In order to prove the assertion of the theorem, we fix some $x\in B$. By definition of $B$,
	for infinitely many indices
	$i\in\mathbb N$, there exists a rectangle $R_{i\ell_i}$ in the 
	collection $\mathcal C_i$ such that $x\in R_{i\ell_i}$,
	$\diam R_{i\ell_i} \leq 1/i$\,, and
	$|P_{{\bf\Delta}^{(i,\ell_i)}}\varphi(x)|=
	|P_{R_{i\ell_i}}\varphi(x)|\geq c_{\bf k}/\varepsilon_i$. Therefore, since
	$\varepsilon_i\to 0$, we have, for all $x\in B$,
	\begin{equation*}
		\limsup_{n\to\infty} |P_{{\bf\Delta}_n}\varphi(x)| = \infty.
	\end{equation*}
This completes the proof of the theorem.
\end{proof}

\section{Final remarks and open problems}\label{sec:final remarks}
It is natural to ask whether the rather general structure of the partitions $\bf\Delta$, whose mesh
diameter tends to zero in Theorem \ref{thm:ae}, can be relaxed to obtain a.e.
convergence for a larger class than  $L(\logplus L)^{d-1}$. A result in this direction is supported by the fact that in the case of piecewise constant functions, we get a.e. convergence for all $L_1$-functions provided the underlying sequence of partitions is nested.
This holds as the sequence of projection operators applied to an
$L_1$-function then forms a martingale. 
Although at first it seems that approaching this problem for general spline
orders under the same framework should lead to a positive or negative answer, we must say that it is far from clear
if such a result holds. On the other hand, it is  unclear how to generalize Saks' construction from \cite{Saks1935}
to this setting, since the sequence of partitions constructed in the proof of Theorem \ref{thm:saks-extension} is not nested.

We close this work with the following open problem:

\begin{problem}
	Is it true that the a.e. convergence in Theorem \ref{thm:ae} holds for all $f\in L_1$ under the assumption that the sequence of partitions is nested?
\end{problem}

\subsection*{Acknowledgments}
We are grateful to the anonymous referees for their valuable suggestions that improved the quality of the paper.

M. Passenbrunner was supported by the Austrian Science Fund, FWF projects P 23987-N18 and P 27723-N25. J. Prochno was supported in parts by the Austrian Science Fund, FWFM 1628000. 

\bibliographystyle{plain}
\bibliography{tensor}
\end{document}